\begin{document}
		\newtheorem{theorem}{Theorem}[section]
	\newtheorem{problem}[theorem]{Problem}
	\newtheorem{corollary}[theorem]{Corollary}
	\newtheorem{observation}[theorem]{Observation}
	\newtheorem{definition}[theorem]{Definition}
	\newtheorem{conjecture}[theorem]{Conjecture}
	\newtheorem{question}[theorem]{Question}
	\newtheorem{lemma}[theorem]{Lemma}
	\newtheorem{proposition}[theorem]{Proposition}
	\newtheorem{example}[theorem]{Example}
	\newenvironment{proof}{\noindent {\bf
			Proof.}}{\hfill $\square$\par\medskip}
	\newcommand{\remark}{\medskip\par\noindent {\bf Remark.~~}}
	\newcommand{\pp}{{\it p.}}
	\newcommand{\de}{\em}

	\newcommand{\1}{{\uppercase\expandafter{\romannumeral1}}}
	\newcommand{\2}{{\uppercase\expandafter{\romannumeral2}}}
	\newcommand{\3}{{\uppercase\expandafter{\romannumeral3}}}
	\newcommand{\4}{{\uppercase\expandafter{\romannumeral4}}}

	\begin{frontmatter}
	\title{Spectral radius and  the  2-power  of  Hamilton  paths\tnoteref{tnote1}}
	\tnotetext[tnote1]{This work is supported by National Natural Science
		Foundation of China (12271169), Science and Technology Commission
		of Shanghai Municipality (22DZ2229014) and Innovation
		Action Plan (Basic research projects) of Science and Technology Commission of Shanghai Municipality (21JC1401900).}
	
	
	\author[1]{Te Pi} 
	\ead{pt20050912@163.com}
	
	\author[2]{Rui Sun\corref{cor}} 
	\cortext[cor]{Corresponding author.}
	\ead{sunruicaicaicai@163.com}
	
	\author[2,3] {Long-Tu Yuan} 
	\ead{ltyuan@math.ecnu.edu.cn}
	
	\address[1]{ Shanghai Shibei Senior High School, Shanghai 200071, China.}
	\address[2] {Department of Mathematicss, East China Normal University, Shanghai 200241, China.}
    \address[3]	{Key Laboratory of MEA(Ministry of Education)  and Shanghai Key Laboratory of PMMP, Shanghai 200241, China.}
	\date{}
	
	\begin{abstract}
		
		We determine the maximum number of a graph without containing the 2-power of a Hamilton path.
		Using this result, we establish a spectral condition for a graph containing the 2-power of a Hamilton path.
	\end{abstract}

%
	
	\begin{keyword}
 2-power of graphs \sep	Hamilton  path \sep	Spectral radius \sep Extremal graph \sep	$H$-free graphs
		
		
		
	\end{keyword}
	
	\end{frontmatter}
	
	\section{Introduction}

\noindent 

Graphs considered below will always be simple. 
A simple graph $G$ consists of a finite nonempty set of vertices $V(G)$ and a set of edges $E(G)$.
Let $e(G)=\left| E(G) \right| $.
If $uv$ is an edge in graph $G$, edge $uv$ is said to be incident with vertices $u$ and $v$, and  vertices $u$ and $v$ are said to be adjacent.
Let $d(u)$ be the number of edges in $G$ which incident with vertex  $u$.
We denote by $\bigtriangleup(G)$ and $\delta(G)$ the maximum and minimum degree of $G$, respectively.
Let $\delta^*(G) =\min\{d(u): u\in V(G) ~\mbox{is a non-isolated vertex}\}$.
We use $C_n$, $P_n$, $K_n$ and $S_n$ to denote the cycle, the path, the complete graph and the star on $n$ vertices, respectively.
For a subgraph $H$ of $G$, we use $G-E(H)$ to denote the graph obtained from $G$ by deleting edges of $H$.
The complement graph of $G$, denoted $\overline{G}$, is the same vertex set as $G$, but in which two such vertices are adjacent if and only if they are not adjacent in $G$.
We call a cycle and a path contain all vertices of $G$ as a Hamilton cycle  and  a Hamilton path of $G$, respectively.
For graphs $G$ and $H$, we denote $G\cup H$ by the disjoint union of $G$ and $H$.

Throughout the paper we use the standard graph theory notation (see, e.g., \cite{2022Khan112908}).
We use $G^{+t}$ to denote the set of graphs obtained from $G$ by adding a new vertex and joining it to any $t$ vertices of $G$.
In particularly, we use $G^+$ instead of $G^{+t}$ for $t=1$.
Let $G^-$ denote the set of graphs obtained from $G$ by deleting any edge.
The 2-power of a graph $G$, denoted by $G^2$, is another graph that has the same vertex set as $G$, but in which two vertices are adjacent when their distance in $G$ is at most two.
For graphs $G$ and $H$, we say that $G$ packs with $H$ if $K_n$ contains edge-disjoint copies of $G$ and $H$.
In \cite{1961Ore315}, Ore got the maximum number of edges in a graph without containing a Hamilton cycles.
	\begin{theorem}[Ore \cite{1961Ore315}]
	Let $G$ be a graph on $n\geqslant4$ vertices. 
	If $e(G)\geqslant {n-1 \choose 2}+1$, then $G$ contains a Hamilton cycle unless $G= K_n -E(S_{n-1})$ or $G=K_5 -E(K_3)$.
\end{theorem}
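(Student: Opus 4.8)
The plan is to prove the contrapositive characterization: assume $G$ is a non-Hamiltonian graph on $n\geqslant 4$ vertices with $e(G)\geqslant\binom{n-1}{2}+1$, and deduce that $G$ is $K_n-E(S_{n-1})$, or (only when $n=5$) $K_5-E(K_3)$. First I would replace $G$ by an edge-maximal non-Hamiltonian graph $H$ on the same vertex set containing $G$; such an $H$ exists because $G$ is non-Hamiltonian, and $H\neq K_n$ since $K_n$ is Hamiltonian for $n\geqslant 3$. By maximality, for every pair of non-adjacent vertices $x,y$ of $H$ the graph $H+xy$ is Hamiltonian, equivalently $H$ has a Hamilton path with endpoints $x$ and $y$.

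Next I would run the classical Ore rotation argument. Let $u$ be a vertex of minimum degree $d=\delta(H)$; since $H\neq K_n$, $u$ has a non-neighbour. For any non-neighbour $w$ of $u$, pick a Hamilton path $u=x_1x_2\cdots x_n=w$ of $H$. If $x_1x_{i+1}\in E(H)$ and $x_ix_n\in E(H)$ for some $i$, then $x_1x_2\cdots x_ix_nx_{n-1}\cdots x_{i+1}x_1$ is a Hamilton cycle, a contradiction; hence among the $d$ indices $i$ with $x_1x_{i+1}\in E(H)$ none has $x_ix_n\in E(H)$. These $d$ vertices $x_i$ are distinct from $w=x_n$ and include $u=x_1$, so $d(w)\leqslant n-1-d$. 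Applying this to every non-neighbour of $u$, together with $d(u)=d$ and $d(x)\leqslant n-1$ for the $d$ neighbours of $u$, gives
\[
2e(H)\leqslant d+d(n-1)+(n-1-d)^2 = d^2-(n-2)d+(n-1)^2 =: f(d),
\]
and $d\leqslant\lfloor(n-1)/2\rfloor$ because $d\leqslant d(w)\leqslant n-1-d$.

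Since $f$ is convex, its maximum over $1\leqslant d\leqslant\lfloor(n-1)/2\rfloor$ is attained at an endpoint, and a short computation gives $f(1)=2\big(\binom{n-1}{2}+1\big)$ while $f(\lfloor(n-1)/2\rfloor)\leqslant f(1)$, with equality precisely when $n=5$. Hence $e(H)\leqslant\binom{n-1}{2}+1$; combined with $\binom{n-1}{2}+1\leqslant e(G)\leqslant e(H)$ this forces $e(G)=e(H)=\binom{n-1}{2}+1$, so $H=G$ and every inequality above is tight. Tightness of $2e(H)\leqslant f(d)=f(1)$ forces $d=1$ unless $n=5$, in which case $d\in\{1,2\}$; moreover every neighbour of $u$ must be universal and every non-neighbour of $u$ must have degree exactly $n-1-d$.

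Finally I would read off the extremal graphs. If $d=1$, the degree conditions make $G-u$ an $(n-2)$-regular graph on $n-1$ vertices, i.e.\ $K_{n-1}$, with $u$ a pendant vertex, so $G=K_n-E(S_{n-1})$. If $d=2$ and $n=5$, the two neighbours of $u$ are universal and the two non-neighbours of $u$ are adjacent only to those two, so $\overline{G}$ is a triangle on $u$ and its two non-neighbours and $G=K_5-E(K_3)$. A one-line check (a degree-$1$ vertex in the first case; an independent set of three vertices in a $5$-vertex graph in the second) confirms both are indeed non-Hamiltonian. I expect the main obstacle to be the equality analysis: verifying that $f(\lfloor(n-1)/2\rfloor)$ stays strictly below $f(1)$ for all $n\geqslant 6$ (so that no spurious extremal family appears) and that the structure is uniquely pinned down in the borderline case $n=5$, $d=2$; the rotation lemma and the degree count themselves are routine.
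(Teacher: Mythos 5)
The paper does not prove this statement; it is quoted as a known result of Ore with only a citation, so there is no in-paper argument to compare against. Your proof is correct and is the standard one: pass to an edge-maximal non-Hamiltonian supergraph, use the rotation argument to get $d(u)+d(w)\leqslant n-1$ for a minimum-degree vertex $u$ and each of its non-neighbours $w$, bound $2e(H)$ by the convex function $f(d)$, and read off the extremal structure from the equality conditions (the computation $f(1)-f(\lfloor (n-1)/2\rfloor)=\frac{(n-3)(n-5)}{4}$ for odd $n$ is exactly what isolates the sporadic case $K_5-E(K_3)$ at $d=2$). The only steps left implicit are that $\delta(H)\geqslant 1$ and that $u$ has a non-neighbour, but both follow at once from $H\neq K_n$ and the existence of a Hamilton path starting at $u$, so the argument is complete.
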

Fiedler and Nikiforov \cite{2010Fiedler2170} determined the maximum number of edges in a graph without containing a Hamilton paths.
In 2022, Khan and Yuan \cite{2022Khan112908} determined the maximum number of edges of a graph without containing the 2-power of a Hamilton cycle and characterized all its extremal graphs.

We define the forbidden family of graphs $\mathcal{H}_n$ with $n\geqslant 6$ as follows (see Table 1) and let $\mathcal{H}^*_n$
be the sets of graphs obtained from $\mathcal{H}_n$ by adding $S_{n-2} \cup K_2$ and $ S_{n-1}$ to $\mathcal{H}_n$ for $n\in \{6,9\}$.
We call $G$  a $\mathcal{H}^*_n$-free graph if $G$ contains no  graph in $\mathcal{H}^*_n$ as a subgraph.
In particularly, we call $G$  a $H$-free graph instead of a $\mathcal{H}^*_n$-free graph for $\mathcal{H}^*_n = \{{H}\}$.

\begin{table}[htbp]
	\begin{center}
		\begin{tabular}{|c|c|c|c|}
			\hline
			$n$ &  $\mathcal{H}_n$ & $e(H)$, $H\in \mathcal{H}_n$ & $t=\left\lfloor {n}/ {4}\right\rfloor$\\
			\hline
			6 &  $ K_3$  & 3& 1\\
			\hline
			7 & $K_4^-, S_5\cup K_2, S_6$ & 5 & 1 \\
			\hline
			8 & $K_4, S_6\cup K_2, S_7$ & 6 & 2 \\
			\hline
			9 & $K_4$ & 6 & 2 \\
			\hline
			10 & $S_8\cup K_2, S_9$ & 8 & 2 \\
			\hline
			11 & $S_9\cup K_2, S_{10}$ & 9 & 2 \\
			\hline
			12 & $K_5, S_{10}\cup K_2, S_{11}$ & 10 & 3\\
			\hline
			13 & $S_{11}\cup K_2, S_{12}$ & 11 & 3 \\
			\hline
			$n\geqslant14$ &  $S_{n-2} \cup K_2, S_{n-1}$ & $n-2$& $\left\lfloor{n}/{4}\right\rfloor$\\
			\hline
		\end{tabular}
	\end{center}
	\centering  
	\caption{the graphs in $\mathcal{H}_n$}  
	\label{table1}  
\end{table}

We will establish the following theorem.
	\begin{theorem}\label{theorem1.1}
	Let $H$ be a graph on $n$ vertices with at most $n-2$ edges. Then $H$ packs with $P_n^2$ if and only if $H$ is $\mathcal{H}^*_n$-free graph.
\end{theorem}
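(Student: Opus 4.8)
\medskip
\noindent\textbf{Sketch of the proof.}
The first step is to recast the packing statement. A copy of $H$ and a copy of $P_n^2$ inside $K_n$ are both spanning, so $H$ packs with $P_n^2$ if and only if the complement $\overline H$ (taken on the vertex set of $H$) contains a spanning copy of $P_n^2$; writing $P_n^2$ in its natural order, this is in turn equivalent to the existence of an ordering $u_1,\dots,u_n$ of $V(H)$ in which every three consecutive vertices form an independent set of $H$ --- call such an ordering a \emph{good order}. Since $e(H)\le n-2$, the graph $H$ is disconnected and has average degree below $2$; in particular $\Delta(H)=n-2$ forces $H\supseteq S_{n-1}$.

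For the ``only if'' implication I would show that $H\supseteq F$ destroys every good order, for each $F\in\mathcal H^*_n$. For the star-type members: a vertex $v$ with $d_H(v)\ge n-3$ must occupy position $1$ or $n$ in a good order (otherwise at least three vertices lie within distance two of $v$ and must be non-neighbours, forcing $d_H(v)\le n-4$), and then the two vertices immediately after $v$ must be non-adjacent non-neighbours of $v$ --- impossible for $S_{n-1}$ (where $d_H(v)=n-2$, so no position works at all) and for $S_{n-2}\cup K_2$ (where the forced pair is an edge). For the clique-type members, a $K_m\subseteq H$ is an independent set of size $m$ in $\overline H$, while $\alpha(P_n^2)=\lceil n/3\rceil$; since $t+2=\lfloor n/4\rfloor+2>\lceil n/3\rceil$ exactly for $n\in\{6,8,9,12\}$, a copy of $K_{t+2}$ in $H$ blocks a spanning $P_n^2$ in $\overline H$ in precisely those cases. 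The one remaining member, $K_4^-$ for $n=7$, is handled by checking directly that every four vertices of $P_7^2$ span at least two edges, whereas a $K_4^-\subseteq H$ leaves at most one edge on those four vertices in $\overline H$.

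The substance of the theorem is the ``if'' implication, which I would prove by induction on $n$, verifying the base range $6\le n\le n_0$ (a small explicit bound, still inside the range where $\mathcal H_n$ carries clique members) directly from the short list of admissible graphs. The engine of the induction is a \emph{reinsertion lemma}: if $u\in V(H)$ satisfies $1\le d_H(u)<n/4$, delete it, take a good order of $H-u$ from the induction hypothesis, and reinsert $u$ into one of the $n$ gaps of that order; each neighbour of $u$ spoils at most four gaps, so some gap survives, and reinserting $u$ there keeps the order good because the only new three-vertex windows pair $u$ with four fixed vertices, none of them adjacent to $u$. To run this, split on $\Delta(H)$. If $\Delta(H)\ge n-4$, then $H$ is a near-star with at most two edges away from its maximum-degree vertex $v$, and (as $H$ is neither $S_{n-1}$ nor $S_{n-2}\cup K_2$) the at most three non-neighbours of $v$ contain a non-adjacent pair; place $v$ at position $1$, such a pair at positions $2,3$, and distribute the pendant vertices over the rest while keeping the few extra edges at distance at least $3$ --- a short finite check. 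If $\Delta(H)\le n-5$ and some non-isolated vertex $u$ has $d_H(u)<n/4$, apply the reinsertion lemma: since $d_H(u)\ge1$ we have $e(H-u)\le(n-1)-2$, and since $\Delta(H)\le n-5$ the graph $H-u$ contains no star-type member of $\mathcal H^*_{n-1}$, hence (once $n>n_0$) is $\mathcal H^*_{n-1}$-free. Finally, if $\Delta(H)\le n-5$ but every non-isolated vertex has degree at least $n/4$, then $2(n-2)\ge\sum_{v}d_H(v)$ forces fewer than eight non-isolated vertices, and one builds a good order directly by placing those few vertices pairwise at distance at least $3$ (a minor variant, grouping mutually non-adjacent vertices, is needed for the smallest $n$ in this case) and padding with isolated vertices.

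The main obstacle is this ``if'' direction. The reinsertion lemma and the three-way case split are clean, but the two direct constructions --- the near-star case $\Delta(H)\ge n-4$, and the few-non-isolated-vertices case for small $n$ --- need a careful though routine enumeration, and one must choose $n_0$ so that the base cases (which are exactly where the clique obstructions $K_3,K_4,K_5$ and $K_4^-$ are active) stay finite while the inductive step never reduces to a forbidden graph. The appearance of the threshold $n/4$, and hence of $t=\lfloor n/4\rfloor$ in the table, is precisely what makes the reinsertion step dovetail with the degree case analysis.
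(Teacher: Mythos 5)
Your proposal is correct and follows essentially the same route as the paper: your reinsertion lemma is the paper's Proposition~\ref{proposition2.0}, your few-non-isolated-vertices case matches the paper's handling of $\delta^*(F)\geqslant t+1$ by packing a small clique, and your base range $6\leqslant n\leqslant n_0$ with its finite enumerations corresponds to the paper's cases (a) and (c) for $7\leqslant n\leqslant 13$. The only substantive difference is bookkeeping --- you split on $\Delta(H)$ and treat near-stars by direct construction where the paper instead chooses a different deletable vertex, and your threshold $d_H(u)<n/4$ is marginally weaker than the paper's $s\leqslant\lfloor n/4\rfloor$ --- neither of which changes the argument.
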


As a corollary of Theorem \ref{theorem1.1}, we determine the maximum number of edges in $n$-vertex $P_n ^2$-free graphs.

\begin{corollary}\label{corollary1}
Let $G$ be a $P_n ^2$-free graph on $n \geqslant6$ vertices.
Then we have 
$$e(G) \leqslant\left\{\begin{array}{ll}12, & n=6 ; \\ 
	30, & n=9 ; \text { and } \\ {n-1 \choose 2}+1, & \text { otherwise. }\end{array}\right.$$
 Moreover, the equality holds if and only if $G=K_n-E(H)$ with $H \in \mathcal{H}_n$.
\end{corollary}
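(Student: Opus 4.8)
The plan is to derive Corollary~\ref{corollary1} from Theorem~\ref{theorem1.1} by a counting argument on the complement. Observe that $P_n^2$ has exactly $2n-3$ edges (it is a path plus the $n-2$ ``second-neighbour'' chords), so a spanning subgraph $F\subseteq K_n$ is a copy of $P_n^2$ precisely when $F\cong P_n^2$; and $G$ is $P_n^2$-free on $n$ vertices exactly when $\overline G$, viewed as a graph on the same $n$ vertices, does \emph{not} pack with $P_n^2$ in the sense that no copy of $P_n^2$ avoids all edges of $G$ --- equivalently, the edge sets of $\overline G$ contain no copy of $P_n^2$. Wait, I must be careful: $G$ contains $P_n^2$ as a subgraph iff $K_n$ has a copy of $P_n^2$ using only edges of $G$, i.e. $\overline G$ (on the same vertex set) is edge-disjoint from that copy, i.e. $\overline G$ packs with $P_n^2$. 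So $G$ is $P_n^2$-free $\iff$ $\overline G$ does \emph{not} pack with $P_n^2$.

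First I would restrict attention to the case $e(\overline G)\le n-2$, which is exactly the regime where Theorem~\ref{theorem1.1} applies, and rephrase: if $e(\overline G)\le n-2$ then $\overline G$ fails to pack with $P_n^2$ iff $\overline G$ is not $\mathcal H_n^*$-free, i.e. $\overline G\supseteq H$ for some $H\in\mathcal H_n^*$. Translating to $G$: $e(G)\ge\binom n2-(n-2)=\binom{n-1}2+1$ forces $G$ to be $P_n^2$-free only when $\overline G$ contains a member of $\mathcal H_n^*$. The second step is therefore to show that \emph{if} $e(G)>\binom{n-1}2+1$ (for $n\notin\{6,9\}$), or $e(G)>12$ ($n=6$), or $e(G)>30$ ($n=9$), then $G$ must contain $P_n^2$. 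For the generic $n$: $e(\overline G)\le n-3$, and since every graph in $\mathcal H_n^*$ for generic $n$ (namely $S_{n-2}\cup K_2$ and $S_{n-1}$) has exactly $n-2$ edges, $\overline G$ with at most $n-3$ edges cannot contain any of them, so by Theorem~\ref{theorem1.1} $\overline G$ packs with $P_n^2$, i.e. $G$ contains $P_n^2$; this gives the upper bound $e(G)\le\binom{n-1}2+1$. The exceptional thresholds at $n=6,9$ arise because $\mathcal H_n^*$ there also contains the smaller graphs $K_3$ (for $n=6$) and $K_4$ (for $n=9$), which have $3<n-2$ and $6<n-2$ edges respectively, so one cannot rule them out by an edge count alone; instead one must note that $\binom{n-1}2+1-e(K_3)$ resp.\ $\binom{n-1}2+1-e(K_4)$ is exactly the claimed bound ($\binom52+1-3\cdot? $ --- I'd recompute: $\binom52+1=11$? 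No: $n=6\Rightarrow\binom52+1=11$, but the stated bound is $12$, and indeed $\binom62 - e(K_3)=15-3=12$; similarly $n=9\Rightarrow\binom92-e(K_4)=36-6=30$), confirming the numbers and pinpointing why these two values of $n$ are special.

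For the equality characterisation I would argue: equality $e(G)=\binom{n-1}2+1$ with $G$ being $P_n^2$-free forces $\overline G$ to have exactly $n-2$ edges and to fail to pack with $P_n^2$, so by Theorem~\ref{theorem1.1} $\overline G$ is not $\mathcal H_n^*$-free, i.e.\ $\overline G\supseteq H$ for some $H\in\mathcal H_n^*$; but $e(\overline G)=n-2=e(H)$ for every $H\in\mathcal H_n$ (and one checks the two extra graphs $S_{n-2}\cup K_2$, $S_{n-1}$ added for $n\in\{6,9\}$ also have $n-2$ edges, though for $n=6,9$ they are already accounted for since $\mathcal H_6\supseteq\{K_3\}$ with $e=3\ne 4$, hmm --- here I'd handle $n=6,9$ from their own equality values $12,30$, where $\overline G$ has $3$ resp.\ $6$ edges and must contain $K_3$ resp.\ $K_4$, forcing $\overline G=K_3$ resp.\ $\overline G=K_4$), hence $\overline G=H$ exactly, i.e.\ $G=K_n-E(H)$ with $H\in\mathcal H_n$. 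Conversely each such $G=K_n-E(H)$ has the right edge count, and is $P_n^2$-free because $\overline G=H\in\mathcal H_n\subseteq\mathcal H_n^*$ is not $\mathcal H_n^*$-free, so by Theorem~\ref{theorem1.1} it does not pack with $P_n^2$.

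The main obstacle I anticipate is not the logical skeleton --- which is a clean complementation plus an edge count --- but the bookkeeping around $n\in\{6,9\}$ and, more subtly, making sure the definition of ``packs'' is applied with the vertex sets correctly identified: Theorem~\ref{theorem1.1} is about an $n$-vertex graph $H$ packing with $P_n^2$ inside $K_n$, and one must verify that ``$G$ contains $P_n^2$ as a (spanning) subgraph'' is literally the same as ``$\overline G$ packs with $P_n^2$'', including the degenerate possibility that $\overline G$ has isolated vertices or that $G$ itself is not connected. A secondary point requiring care is confirming $e(P_n^2)=2n-3\le\binom n2$ so that the packing question is non-vacuous for all $n\ge 6$, and checking that for $n\le 5$ (excluded from the corollary) the statement would fail, which is why the hypothesis $n\ge 6$ is present. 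Once these are pinned down, the proof is a half-page.
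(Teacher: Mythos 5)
Your proposal is correct and is exactly the intended derivation: the paper states Corollary~\ref{corollary1} without an explicit proof, and the only sensible route is the complementation you use ($G$ contains $P_n^2$ iff $\overline G$ packs with $P_n^2$), combined with the edge counts $e(H)=n-2$ for $H\in\mathcal H_n$ at generic $n$ and the smaller counts $e(K_3)=3$, $e(K_4)=6$ that explain the thresholds $12=\binom62-3$ and $30=\binom92-6$ at $n=6,9$. Your handling of the equality case and of the two exceptional values of $n$ is also right, so nothing is missing.
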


Let $A$ be the adjacency matrix of $G$. The spectral radius of $G$, denoted by $\mu(G)$, is the maximum eigenvalue of $A$.
In 2023, Yan, He, Feng and Liu \cite{2023Yan113155} established a spectral condition for a graph containing $C^2_n$.
\begin{theorem}[Yan, He, Feng and Liu \cite{2023Yan113155}\label{theorem01}]
Let $G$ be a graph on $n \geqslant 18$ vertices. If $\mu(G)>n-2$, then $G$ contains $C_n^2$ unless $G$ is a subgraph of  $K_n- E(S_{n-3})$.
\end{theorem}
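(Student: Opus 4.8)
The plan is to turn the spectral hypothesis into strong structural information about $G$, reformulate containment of $C_n^2$ as a packing problem, feed this into the Khan--Yuan theorem, and clean up the few residual cases by a direct eigenvalue estimate. Let $x$ be the Perron eigenvector of $G$, scaled so that $x_u=\max_v x_v=1$. Then $\mu:=\mu(G)=\sum_{v\sim u}x_v\le d(u)\le n-1$, so the hypothesis $\mu>n-2$ forces $d(u)=n-1$: the vertex $u$ dominates $G$, which is in particular connected. Multiplying the eigenvalue equation at $u$ by $\mu$ and using $0<x_w\le1$,
$$\mu^2=(n-1)+\sum_{w\ne u}\big(d(w)-1\big)x_w\le(n-1)+\big(2e(G)-2(n-1)\big)=2e(G)-n+1,$$
so $e(G)\ge\binom{n-1}{2}+1$, that is, $\overline G$ has at most $n-2$ edges. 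Two elementary facts finish the setup: writing $G=K_n-E(\overline G)$, the graph $G$ contains $C_n^2$ if and only if $\overline G$ packs with $C_n^2$; and the conclusion ``$G\subseteq K_n-E(S_{n-3})$'' is equivalent to ``$\overline G\supseteq S_{n-3}$'', equivalently to $\delta(G)\le3$.

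It therefore suffices to prove the following: an $n$-vertex graph $F$ with at most $n-2$ edges either packs with $C_n^2$, or contains $S_{n-3}$, or is, up to isolated vertices, one of $S_{n-4}\cup K_3$ and $S_{n-4}\cup S_4$. This is the structural heart of the matter, and it is essentially the Khan--Yuan theorem~\cite{2022Khan112908}. Writing $F':=K_n-E(F)$: since $F$ is sparse, $F'$ is dense and connected enough that the only obstruction to finding a copy of $C_n^2$ in it is local, namely some vertex $v$ whose $F'$-neighbourhood contains no copy of $P_4$ (recall that in $C_n^2$ the four neighbours of every vertex induce a $P_4$). A short count against the at most $n-2$ missing edges shows that such a $v$ must satisfy either $d_{F'}(v)\le3$, in which case $F\supseteq S_{n-3}$, or $d_{F'}(v)=4$ with $F'$-neighbourhood free of $P_4$; in the latter case the $n-2$ missing edges are exactly exhausted, and this pins $F$ down (up to isolated vertices) to $S_{n-4}\cup K_3$ or $S_{n-4}\cup S_4$.

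The three cases now resolve. If $\overline G$ packs with $C_n^2$ then $G$ contains $C_n^2$. If $S_{n-3}\subseteq\overline G$ then $G\subseteq K_n-E(S_{n-3})$, which is the exceptional conclusion. If $\overline G$ is $S_{n-4}\cup K_3$ or $S_{n-4}\cup S_4$ together with isolated vertices, then $G$ equals $K_n-E(S_{n-4}\cup K_3)$ or $K_n-E(S_{n-4}\cup S_4)$, and solving the symmetry-reduced eigenvalue equations of this graph yields $\mu(G)<n-2$ (the relevant point being that its few vertices of small degree carry Perron weight bounded well away from $1$, so the crude estimate of the first paragraph is far from sharp for this particular graph), contradicting $\mu(G)>n-2$. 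Hence this last possibility cannot occur, and in the remaining cases the theorem's conclusion holds.

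I expect the main obstacle to be the structural classification of the second paragraph: proving rigorously that, for $n\ge18$, the \emph{only} minimal non-packers of $C_n^2$ fitting within $n-2$ edges are $S_{n-3}$, $S_{n-4}\cup K_3$ and $S_{n-4}\cup S_4$, with no sporadic small configurations surviving. This is exactly the work behind the Khan--Yuan theorem, and the threshold $n\ge18$ plays the role the exceptions $n\in\{6,9\}$ play in Corollary~\ref{corollary1}. A secondary delicacy is that the argument is genuinely tight: $\mu\big(K_n-E(S_{n-3})\big)=n-2+O(n^{-2})$ exceeds $n-2$ only barely, while the near-miss graphs $K_n-E(S_{n-4}\cup K_3)$ and $K_n-E(S_{n-4}\cup S_4)$ already satisfy the crude bound $e(G)\ge\binom{n-1}{2}+1$; so neither the ``unless'' clause nor the refined eigenvalue estimate of the third paragraph can be dispensed with.
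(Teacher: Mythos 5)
First, a point of comparison: the paper does not prove this statement at all --- Theorem~\ref{theorem01} is quoted from Yan, He, Feng and Liu \cite{2023Yan113155} and used as a black box. The only thing to measure your proposal against is the paper's proof of the analogous Theorem~\ref{theorem2}, and your architecture mirrors it exactly: a spectral estimate forcing $e(G)\geqslant\binom{n-1}{2}+1$, a reduction of ``$G$ contains the square of a Hamilton cycle/path'' to a packing statement about the complement, a classification of the non-packing graphs on at most $n-2$ edges, and a direct spectral-radius computation to eliminate the exceptional graphs that survive the edge count. Your first paragraph is in fact slightly cleaner than the paper's: you get connectivity (indeed a dominating vertex) straight from the Perron vector and rederive Hong's bound $\mu^2\leqslant 2e(G)-n+1$ on the spot, rather than routing through Lemmas~\ref{lemma3.1} and~\ref{lemma3.2}. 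The reduction to packing, the equivalence of the exceptional conclusion with $\delta(G)\leqslant 3$, and the identification of $S_{n-4}\cup K_3$ and $S_{n-4}\cup S_4$ as the graphs whose three residual edges must land in the $P_4$ induced on the four $C_n^2$-neighbours of the star's image are all correct.

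The genuine gap is your second paragraph. The assertion that ``the only obstruction to finding a copy of $C_n^2$ is local'' --- i.e.\ that every $n$-vertex $F$ with at most $n-2$ edges in which no vertex has a $P_4$-free complementary neighbourhood actually packs with $C_n^2$ --- is the entire content of the theorem, and the $P_4$-in-every-neighbourhood condition you verify is only \emph{necessary}, not sufficient; nothing in your sketch shows sufficiency. You cannot simply outsource this to \cite{2022Khan112908} as described in this paper, because what is attributed to Khan and Yuan there is the extremal number and the extremal graphs for $C_n^2$-free graphs, whereas your argument needs the full packing classification down to $n-2$ edges --- the exact analogue of Theorem~\ref{theorem1.1}, which in the $P_n^2$ case occupies all of Section~\ref{section 2} and is proved by an induction on $n$ via Proposition~\ref{proposition2.0} together with a long case analysis for small $n$. (If the cited source does prove the packing form, the citation closes the gap, but then your ``short count'' is not a proof sketch of anything new.) A secondary, smaller gap: the claim $\mu\bigl(K_n-E(S_{n-4}\cup K_3)\bigr)<n-2$ and likewise for $S_{n-4}\cup S_4$ is asserted without computation; it is true (an equitable-partition quotient matrix gives roughly $n-2.3$ for $n=20$, for instance), but since the whole theorem collapses without it, it must be carried out for all $n\geqslant 18$, in the same way the paper's proof of Theorem~\ref{theorem2} explicitly computes $\mu(K_n-E(H))$ for each residual $H\in\mathcal{H}^*_n$.
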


We obtain the following theorem concerning $P_n^2$ and $\mu(G)$.
		\begin{theorem}\label{theorem2}
		Let $G$ be an $n$-vertex graph and $n \geqslant 6$. If $\mu(G)>n-2$, then $G$ contains $P_n^2$ unless $G$ is a subgraph of  $K_n- E(S_{n-1})$ or $K_n- E(K_3)$ for $n=6$,
		and a subgraph of $K_n- E(S_{n-1})$ for $n \geqslant 7$.
	\end{theorem}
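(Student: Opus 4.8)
The plan is to derive the theorem from Corollary~\ref{corollary1} together with a standard Rayleigh-quotient argument that compares the spectral radius to the average degree. First I would recall the Wilf-type / Hong-type inequality $\mu(G)^2 \leqslant 2e(G) - \delta(G)(n-1) + (\delta(G)-1)\bigtriangleup(G)$ or, more simply, the bound $\mu(G) \leqslant \frac{d-1}{2} + \sqrt{2e(G) - nd + \frac{(d+1)^2}{4}}$ in terms of the minimum degree $d=\delta(G)$; the upshot is that if $G$ is $P_n^2$-free then by Corollary~\ref{corollary1} we have $e(G) \leqslant \binom{n-1}{2}+1$ for $n \notin \{6,9\}$, and plugging this in gives $\mu(G) \leqslant n-2$ whenever $\delta(G) \geqslant 1$, with a little extra care when $\delta(G)=0$ (an isolated vertex) since then $\mu(G)=\mu(G-v)$ and one can induct or bound directly. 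So the bulk of the work is: (i) show that any $P_n^2$-free graph with $\mu(G) > n-2$ must in fact attain $e(G)=\binom{n-1}{2}+1$ (or the exceptional values at $n=6,9$), and (ii) among the extremal graphs $K_n - E(H)$ with $H \in \mathcal{H}_n$, determine which actually have $\mu > n-2$.

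For step (i): if $e(G) < \binom{n-1}{2}+1$ then $e(G) \leqslant \binom{n-1}{2}$, and I would show $\mu(G) \leqslant n-2$ outright. One clean route: $\mu(G) \leqslant \sqrt{2e(G)\cdot\frac{n-1}{n}}$ is too weak, so instead use that $\mu(G) \leqslant \bigtriangleup(G) \leqslant n-1$ always, and handle the boundary case $\mu(G)$ close to $n-1$ by noting that $\mu(G) > n-2$ forces $G$ to be quite dense and to have large minimum degree. More precisely, the key inequality I would invoke is that for a connected graph $\mu(G) \geqslant \bar d := 2e(G)/n$ with equality iff regular, and the complementary bound: if $\mu(G) > n-2$ then $2e(G) > n(n-2)$ is false in general, so I need the sharper $\mu(G)^2 \leqslant e(G) + \text{(lower-order)}$... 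Actually the cleanest tool here is the bound via $\overline G$: since $\mu(G) + \mu(\overline G) \leqslant $ something, or better, use that $\mu(G) \leqslant n-1 - \frac{1}{\text{something involving } e(\overline G)}$. Concretely, $e(\overline G) = \binom{n}{2} - e(G) \geqslant \binom{n}{2} - \binom{n-1}{2} = n-1$ when $e(G) \leqslant \binom{n-1}{2}$, and there is a known fact that a graph on $n$ vertices with $\mu > n-2$ has $\overline G$ with at most $n-2$ edges (this is essentially the content that $\mu(K_n - E(H)) > n-2$ forces $e(H) \leqslant n-2$, which one proves by a direct eigenvector-interlacing computation on the join/complement structure). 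I would isolate this as a lemma: \emph{if $\mu(G) > n-2$ then $e(\overline G) \leqslant n-2$}, proved by observing $\overline G$ is then a graph on $n$ vertices with $\geqslant n-1$ edges, hence contains a cycle, hence $K_n - E(\overline G)$ has spectral radius $\leqslant n-2$ by a short argument (delete the cycle's edges and use monotonicity plus the spectral radius of $K_n$ minus a Hamilton-ish structure). This reduces everything to the combinatorial statement of Corollary~\ref{corollary1}.

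For step (ii): once we know $e(\overline G) \leqslant n-2$, if $G$ is $P_n^2$-free then $\overline G$ is a spanning subgraph-complement... wait, precisely $G \subseteq K_n$ and $G$ $P_n^2$-free means (taking $H = \overline G$, which has $e(H) = e(\overline G) \leqslant n-2$) that $H$ does \emph{not} pack with $P_n^2$ — because a packing of $H$ with $P_n^2$ in $K_n$ would exhibit $P_n^2$ as a subgraph of $K_n - E(H) = G$. By Theorem~\ref{theorem1.1} this means $H = \overline G$ is \emph{not} $\mathcal{H}^*_n$-free, i.e. $\overline G$ contains some member of $\mathcal{H}^*_n$ as a subgraph. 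Combining with $e(\overline G) \leqslant n-2$ and the explicit list in Table~1, I would enumerate the possibilities: for $n \geqslant 7$, the only members of $\mathcal{H}^*_n$ with at most $n-2$ edges are (sub)graphs forcing $\overline G \supseteq S_{n-1}$ or $\overline G \supseteq S_{n-2}\cup K_2$ (and $S_{n-2}\cup K_2$ itself has $n-1 > n-2$ edges so it is excluded once $e(\overline G) \leqslant n-2$, leaving essentially $S_{n-1}$), whence $G \subseteq K_n - E(S_{n-1})$. For $n = 6$ the extra member $K_3 \in \mathcal{H}_6$ with $e(K_3)=3 \leqslant 4 = n-2$ gives the additional exceptional graph $K_6 - E(K_3)$. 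The main obstacle, and the part requiring genuine care, is this final case-analysis: one must check that no \emph{other} subgraph relation among the small graphs in $\mathcal{H}^*_n$ and graphs with $\leqslant n-2$ edges produces a graph $G$ with $\mu(G) > n-2$ that is not contained in $K_n - E(S_{n-1})$ — in particular verifying, via interlacing or direct computation, that $\mu(K_n - E(S_{n-2}\cup K_2)) \leqslant n-2$ and similarly for the small sporadic graphs at $n \in \{7,8,9,12\}$, and that $\mu(K_n - E(S_{n-1})) > n-2$ (a quick computation: $K_n - E(S_{n-1})$ has the vertex of the star isolated from the rest? no — it is $K_{n-1}$ plus an isolated vertex, so $\mu = n-2$, \textbf{not} $> n-2$; hence the statement is correctly phrased as ``unless $G$ is a \emph{subgraph} of $K_n - E(S_{n-1})$,'' and the real content is that every $P_n^2$-free $G$ \emph{is} such a subgraph). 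I expect this bookkeeping, together with pinning down the right spectral lemma in step (i) with the correct inequality and edge-count threshold, to be where the proof's difficulty concentrates; the rest is assembling Corollary~\ref{corollary1}, Theorem~\ref{theorem1.1}, and monotonicity of $\mu$ under edge deletion.
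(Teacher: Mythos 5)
Your skeleton matches the paper's: a spectral inequality forces $e(\overline G)\leqslant n-2$, the packing formulation turns ``$G$ is $P_n^2$-free'' into ``$\overline G$ contains a member of $\mathcal H^*_n$'' via Theorem~\ref{theorem1.1}, and one finishes by computing $\mu(K_n-E(H))$ for the finitely many $H\in\mathcal H^*_n$ and using monotonicity of $\mu$ under edge deletion. But your proposed proof of the key lemma ``$\mu(G)>n-2\Rightarrow e(\overline G)\leqslant n-2$'' does not work. A graph on $n$ vertices with exactly $n-1$ edges need not contain a cycle (it may be a tree), and, worse, a cycle in $\overline G$ does not cap $\mu(G)$ at $n-2$: an equitable-partition computation gives $\mu(K_n-E(C_k))=\tfrac{1}{2}\bigl(n-4+\sqrt{(n+2)^2-8k}\bigr)>n-2$ whenever $k<(n+1)/2$; in particular $\mu(K_n-E(K_3))>n-2$ for every $n\geqslant 6$ (the paper itself records this for $n=6$). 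The paper instead gets connectivity from Lemma~\ref{lemma3.1} (a Hamilton path exists once $\mu\geqslant n-2$, the only exception being $K_{n-1}\cup K_1$) and then applies Hong's bound $\mu\leqslant\sqrt{2e(G)-n+1}$ (Lemma~\ref{lemma3.2}), whose contrapositive yields $e(G)\geqslant\binom{n-1}{2}+1$. Your alternative claim that $e(G)\leqslant\binom{n-1}{2}+1$ together with $\delta\geqslant 1$ already gives $\mu\leqslant n-2$ is numerically false ($\sqrt{2e-n+1}=\sqrt{(n-2)^2+1}$ at that edge count), and must be false, since $K_n-E(S_{n-1})$ attains that edge count with $\mu>n-2$.

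The endgame bookkeeping, which you correctly identify as the delicate part, also contains errors that change which exceptions survive. $S_{n-2}\cup K_2$ has $(n-3)+1=n-2$ edges, not $n-1$, so it is \emph{not} excluded by $e(\overline G)\leqslant n-2$; one must actually show $\mu(K_n-E(S_{n-2}\cup K_2))\leqslant n-2$, which the paper does by noting every vertex loses at least one edge, so $\Delta\leqslant n-2$. Likewise $K_n-E(S_{n-1})$ is not $K_{n-1}\cup K_1$: since $S_{n-1}$ spans only $n-1$ vertices, its centre keeps one neighbour, so the graph is $K_{n-1}$ with a pendant vertex; it has $\mu>n-2$ (it properly contains $K_{n-1}$ and is connected) and is $P_n^2$-free because its minimum degree is $1<2=\delta(P_n^2)$. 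Hence this is the genuine, non-vacuous exception of the theorem, and your contrary reading inverts its content. Finally, the sporadic members $K_4^-,K_4,K_5$ of $\mathcal H_n$ at $n\in\{7,8,9,12\}$ all pass the edge-count filter and genuinely require the computations $\mu(K_7-E(K_4^-))<5$, $\mu(K_n-E(K_4))<n-2$ for $n\in\{8,9\}$ and $\mu(K_{12}-E(K_5))<10$; you flag these but do not carry them out.
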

	
	\section{Proof of Theorem \ref{theorem1.1}}\label{section 2}
	
	The proof of Theorem \ref{theorem1.1} is based on the following proposition.
	
\begin{proposition}\label{proposition2.0}
	 Let $n\geqslant 7$ and $ s \leqslant \left\lfloor{n}/{4}\right\rfloor$. If $P^2_{n-1}$ packs with $F$, then $P^2_{n}$ packs with each graph in $F^{ +s}$.
\end{proposition}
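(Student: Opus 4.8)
The plan is to use the standard reformulation of packing: $P^2_{n-1}$ packs with $F$ exactly when there is a labelling $v_1,v_2,\dots,v_{n-1}$ of $V(F)$ for which $F$ contains no edge $v_iv_j$ with $|i-j|\le 2$; equivalently $F$ is edge-disjoint from the square of the path $P_{n-1}=v_1v_2\cdots v_{n-1}$. Fix such a labelling. A member of $F^{+s}$ arises by adding a new vertex $w$ joined to a set $T\subseteq V(F)$ with $|T|=s$, so the goal becomes: produce a Hamilton path $Q$ on $V(F)\cup\{w\}$ whose square $Q^2$ is edge-disjoint from this member of $F^{+s}$.

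First I would try the obvious move. For $0\le i\le n-1$ let $Q_i$ be the path $v_1\cdots v_i\,w\,v_{i+1}\cdots v_{n-1}$ (so $Q_0=wv_1\cdots v_{n-1}$ and $Q_{n-1}=v_1\cdots v_{n-1}w$). Inserting a vertex can only lengthen distances between old vertices, hence $Q_i^2$ restricted to $V(F)$ is a subgraph of $P_{n-1}^2$ and so stays edge-disjoint from $F$; moreover the $Q_i^2$-neighbourhood of $w$ is the ``window'' $W_i:=\{v_{i-1},v_i,v_{i+1},v_{i+2}\}\cap V(F)$, a block of at most four consecutive vertices. Thus $Q=Q_i$ solves the problem as soon as $W_i\cap T=\emptyset$. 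Now $v_j\in W_i$ iff $i\in[j-2,j+1]\cap[0,n-1]$, so each $v_j$ lies in at most four of the $n$ windows $W_0,\dots,W_{n-1}$, and in only three of them when $j=1$ or $j=n-1$. Hence at most $4s\le 4\lfloor n/4\rfloor\le n$ of the windows meet $T$; if this count is strictly below $n$ we are done.

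The delicate point, which I expect to be the real obstacle, is the case of equality. It forces $4\mid n$, $s=n/4$, $T\cap\{v_1,v_{n-1}\}=\emptyset$, and the four-element intervals $[j-2,j+1]$ over $v_j\in T$ partitioning $[0,n-1]$; this pins down $T=\{v_2,v_6,v_{10},\dots,v_{n-2}\}$ uniquely. For this single $T$ plain insertion is hopeless, so instead I would take
\[ Q:\quad v_2,\ v_1,\ v_3,\ w,\ v_4,\ v_5,\ v_6,\ \dots,\ v_{n-1}. \]
A short check shows that the only old pairs at distance $\le 2$ along $Q$ are pairs $v_iv_j$ with $|i-j|\le 2$: the block $v_2v_1v_3$ contributes only such pairs, $w$ separates it from $v_4v_5\cdots v_{n-1}$ so that $v_1$ and $v_4$ lie at distance $3$, and the only crossing pair $v_3v_4$ is at distance $2$; hence $Q^2$ stays edge-disjoint from $F$. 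At the same time the $Q^2$-neighbourhood of $w$ is exactly $\{v_1,v_3,v_4,v_5\}$, which is disjoint from $T$. So $Q^2$ misses every edge of the given member of $F^{+s}$, completing the proof. Besides isolating and dispatching this one structured $T$, the only other place needing care is the bookkeeping for the truncated windows $W_0,W_1,W_{n-2},W_{n-1}$ in the counting step.
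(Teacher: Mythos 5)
Your proof is correct, and its first stage is exactly the paper's argument: insert the new vertex $w$ at each of the $n$ possible positions along the path, note that insertion only increases distances between old vertices (so the square restricted to $V(F)$ shrinks and stays edge-disjoint from $F$), and count how many insertion windows the attachment set $T$ can block. The difference is in what happens at the end of the count. The paper concludes from the blocking conditions that $s\geqslant 2+\lfloor (n-4)/4\rfloor=\lfloor n/4\rfloor+1$ and stops; but that inequality is not what the hitting conditions actually give. A set $T$ meeting $\{v_1,v_2\}$, $\{v_{n-2},v_{n-1}\}$ and every four consecutive vertices need only have size $\lceil n/4\rceil$, which equals $\lfloor n/4\rfloor$ when $4\mid n$: for $n=8$ the set $T=\{v_2,v_6\}$ of size $2=\lfloor 8/4\rfloor$ blocks every straight insertion, and in general $T=\{v_2,v_6,\dots,v_{n-2}\}$ does so with $|T|=n/4$. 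So the extremal case you isolate is a genuine gap in the paper's own proof, not a pedantic refinement, and your rearranged Hamilton path $v_2,v_1,v_3,w,v_4,\dots,v_{n-1}$ --- whose square meets the old vertices only in pairs at index-distance at most $2$, and whose window at $w$ is $\{v_1,v_3,v_4,v_5\}$, disjoint from this $T$ --- is precisely the extra step needed to close it. In short, your proof follows the paper's route but is strictly more careful, and the published version should be repaired along the lines you give (either by your path rearrangement or by re-embedding $F$ in the tight case).
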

	\begin{proof}
Let $P_{n-1}=v_1\ldots v_{n-1}$.
Suppose that  $\overline{P^2_{n-1}}$ contains a copy of $F$.
For any four consecutive vertices, say $x_1,x_2,x_3,x_4$ on  $\overline{P^2_{n-1}}$, we can add  a new vertex $y$, edges $x_1x_3, x_2x_4$ and all edges between $y$ and $V(\overline{P^2_{n-1}})\setminus \{x_1,x_2,x_3,x_4\}$ to obtain $\overline{P^2_{n}}$.
If we add a new vertex $y$ and join all edges between $y$ and $V(\overline{P^2_{n-1}})\setminus \{v_1,v_2\}$ (or  $V(\overline{P^2_{n-1}})\setminus \{v_{n-1},v_{n-2}\}$), then the resulting graph is $\overline{P^2_{n}}$.
Thus if $\overline{P^2_{n}}$ is $F^\prime$-free for some $F^\prime\in F^{ +s}$, then the added vertex $z$ must adjacent to at least one vertex of $v_1,v_2$, at least one vertex of $v_{n-2},v_{n-1}$ and at least one vertex of any four consecutive vertices $\overline{P^2_{n-1}}$.
Therefore, $s\geqslant 2+  \lfloor(n-4)/4 \rfloor= \lfloor n/4 \rfloor+1$, contradicting $ s \leqslant \left\lfloor{n}/{4}\right\rfloor$.
\end{proof}
	
	For a subgraph $H$ of $G$, we use $G-H$ to denote the graph obtained from $G$ by deleting vertices and edges of $H$.
	
\begin{center}
	\begin{tikzpicture}
		\path
		(0, 0)
		\foreach \i in {0, ..., 5} {
			+(360/6 * \i:1cm) coordinate (corner \i)
		}
		;\draw [line width=0.2pt]
		(corner 1) -- (corner 4)
		(corner 2) -- (corner 5)
		(corner 3) -- (corner 0)
		(corner 3) -- (corner 5)
		(corner 4) -- (corner 0)
		(corner 4) -- (corner 5)
		;
		
		\draw [line width=1pt][red]
			(corner 0) -- (corner 4)
		    (corner 0) -- (corner 3)
		     (corner 2) -- (corner 5)
		     (corner 5) -- (corner 4);
		
		\fill[radius=2pt] \foreach \i in {0, ..., 5} { (corner \i) circle(2pt) };
		\node at (0,-1.5){$\overline{P_6^2}$};
		
		\path
		(3, 0)
		\foreach \i in {0, ..., 5} {
			+(360/6 * \i:1cm) coordinate (corner \i)
		}
		;\draw [line width=0.2pt]
		(corner 1) -- (corner 4)
		(corner 2) -- (corner 5)
		(corner 3) -- (corner 0)
		(corner 3) -- (corner 5)
		(corner 4) -- (corner 0)
		(corner 4) -- (corner 5)
		;
		\draw [line width=1pt][red]
		(corner 2) -- (corner 5)
		(corner 1) -- (corner 4)
		(corner 5) -- (corner 4)
		(corner 0) -- (corner 3);
		\fill[radius=2pt] \foreach \i in {0, ..., 5} { (corner \i) circle(2pt) };
		\node at (3,-1.5){$\overline{P_6^2}$};
		
		\path
		(6, 0)
		\foreach \i in {0, ..., 5} {
			+(360/6 * \i:1cm) coordinate (corner \i)
		}
		;
		\draw [line width=0.2pt]
		(corner 1) -- (corner 4)
		(corner 2) -- (corner 5)
		(corner 3) -- (corner 0)
		(corner 3) -- (corner 5)
		(corner 4) -- (corner 0)
		(corner 4) -- (corner 5)
		;
		\draw [line width=1pt][red]
		(corner 1) -- (corner 4)
		(corner 5) -- (corner 4)
		(corner 5) -- (corner 2)
		(corner 5) -- (corner 3)
		;
		\fill[radius=2pt] \foreach \i in {0, ..., 5} { (corner \i) circle(2pt) };
		\node at (6,-1.5){$\overline{P_{6}^2}$};
		
		\path
		(9, 0)
		\foreach \i in {0, ..., 5} {
			+(360/6 * \i:1cm) coordinate (corner \i)
		}
		;
		\draw [line width=0.2pt]
		(corner 1) -- (corner 4)
		(corner 2) -- (corner 5)
		(corner 3) -- (corner 0)
		(corner 3) -- (corner 5)
		(corner 4) -- (corner 0)
		(corner 4) -- (corner 5)
		;
		\draw [line width=1pt][red]
		(corner 5) -- (corner 3)
		(corner 2) -- (corner 5)
		(corner 1) -- (corner 4)
		(corner 0) -- (corner 4)
		;
		\fill[radius=2pt] \foreach \i in {0, ..., 5} { (corner \i) circle(2pt) };
		\node at (9,-1.5){$\overline{P_{6}^2}$};
		
		\node at (4.5,-2.5){Figure 2};
	\end{tikzpicture}
\end{center}

\begin{center}
	\begin{tikzpicture}
		\path
		(0, 0)
		\foreach \i in {0, ..., 5} {
			+(360/6 * \i:1cm) coordinate (corner \i)
		}
		;\draw [line width=0.2pt]
		(corner 1) -- (corner 4)
		(corner 2) -- (corner 5)
		(corner 3) -- (corner 0)
		(corner 3) -- (corner 5)
		(corner 4) -- (corner 0)
		(corner 4) -- (corner 5)
		;
		
		\draw [line width=1pt][red]
		(corner 0) -- (corner 4)
		(corner 0) -- (corner 3)
		(corner 5) -- (corner 4)
		(corner 3) -- (corner 5);
		
		\fill[radius=2pt] \foreach \i in {0, ..., 5} { (corner \i) circle(2pt) };
		\node at (0,-1.5){$\overline{P_6^2}$};
		
		\path
		(4, 0)
		\foreach \i in {0, ..., 6} {
			+(360/7 * \i:1cm) coordinate (corner \i)
		}
		;\draw [line width=0.2pt]
		(corner 2) -- (corner 5)
		(corner 2) -- (corner 6)
		(corner 3) -- (corner 0)
		(corner 4) -- (corner 0)
		(corner 3) -- (corner 6)
		(corner 4) -- (corner 6)
		(corner 4) -- (corner 1)
		(corner 5) -- (corner 0)
		(corner 5) -- (corner 1)
		(corner 5) -- (corner 6)
		;
		\draw [line width=1pt][red]
		(corner 2) -- (corner 5)
		(corner 2) -- (corner 6)
		(corner 5) -- (corner 6)
		;
		\fill[radius=2pt] \foreach \i in {0, ..., 6} { (corner \i) circle(2pt) };
		\node at (4,-1.5){$\overline{P_7^2}$};
		
		\path
		(8, 0)
		\foreach \i in {0, ..., 8} {
			+(360/9 * \i:1cm) coordinate (corner \i)
		}
		;
		\draw [line width=0.2pt]
		(corner 0) -- (corner 3)
		(corner 0) -- (corner 4)
		(corner 0) -- (corner 5)
		(corner 0) -- (corner 6)
		(corner 1) -- (corner 4)
		(corner 1) -- (corner 5)
		(corner 1) -- (corner 6)
		(corner 1) -- (corner 7)
		(corner 2) -- (corner 5)
		(corner 2) -- (corner 6)
		(corner 2) -- (corner 7)
		(corner 2) -- (corner 8)
		(corner 3) -- (corner 6)
		(corner 3) -- (corner 7)
		(corner 3) -- (corner 8)
		(corner 4) -- (corner 7)
		(corner 4) -- (corner 8)
		(corner 5) -- (corner 8)
		(corner 8) -- (corner 6)
		;
		\draw [line width=1pt][red]
		(corner 7) -- (corner 3)
		(corner 6) -- (corner 3)
		(corner 7) -- (corner 6)
		(corner 7) -- (corner 1)
		(corner 1) -- (corner 6)
		;
		\fill[radius=2pt] \foreach \i in {0, ..., 8} { (corner \i) circle(2pt) };
		\node at (8,-1.5){$\overline{P_{9}^2}$};
		
		\node at (4,-2.5){Figure 3};
	\end{tikzpicture}
\end{center}

\begin{center}
	\begin{tikzpicture}
		\path
		(0, 0)
		\foreach \i in {0, ..., 9} {
			+(360/10 * \i:1cm) coordinate (corner \i)
		}
		;
		\draw [line width=0.2pt]
		(corner 0) -- (corner 3)
		(corner 0) -- (corner 4)
		(corner 0) -- (corner 5)
		(corner 0) -- (corner 6)
		(corner 0) -- (corner 7)
		(corner 1) -- (corner 4)
		(corner 1) -- (corner 5)
		(corner 1) -- (corner 6)
		(corner 1) -- (corner 7)
		(corner 1) -- (corner 8)
		(corner 2) -- (corner 5)
		(corner 2) -- (corner 6)
		(corner 2) -- (corner 7)
		(corner 2) -- (corner 8)
		(corner 2) -- (corner 9)
		(corner 3) -- (corner 6)
		(corner 3) -- (corner 7)
		(corner 3) -- (corner 8)
		(corner 3) -- (corner 9)
		(corner 4) -- (corner 9)
		(corner 4) -- (corner 7)
		(corner 4) -- (corner 8)
		(corner 5) -- (corner 8)
		(corner 5) -- (corner 9)
		(corner 6) -- (corner 9)
		(corner 7) -- (corner 9)
		(corner 8) -- (corner 6)
		;
		\draw [line width=1pt][red]
		(corner 7) -- (corner 4)
		(corner 8) -- (corner 4)
		(corner 8) -- (corner 1)
		(corner 7) -- (corner 8)
		(corner 7) -- (corner 1)
		(corner 1) -- (corner 4)
		;
		\fill[radius=2pt] \foreach \i in {0, ..., 9} { (corner \i) circle(2pt) };
		\node at (0,-1.5){$\overline{P_{10}^2}$};
		
		\path
		(5, 0)
		\foreach \i in {0, ..., 9} {
			+(360/10 * \i:1cm) coordinate (corner \i)
		}
		;
		\draw [line width=0.2pt]
		(corner 0) -- (corner 3)
		(corner 0) -- (corner 4)
		(corner 0) -- (corner 5)
		(corner 0) -- (corner 6)
		(corner 0) -- (corner 7)
		(corner 1) -- (corner 4)
		(corner 1) -- (corner 5)
		(corner 1) -- (corner 6)
		(corner 1) -- (corner 7)
		(corner 1) -- (corner 8)
		(corner 2) -- (corner 5)
		(corner 2) -- (corner 6)
		(corner 2) -- (corner 7)
		(corner 2) -- (corner 8)
		(corner 2) -- (corner 9)
		(corner 3) -- (corner 6)
		(corner 3) -- (corner 7)
		(corner 3) -- (corner 8)
		(corner 3) -- (corner 9)
		(corner 4) -- (corner 9)
		(corner 4) -- (corner 7)
		(corner 4) -- (corner 8)
		(corner 5) -- (corner 8)
		(corner 5) -- (corner 9)
		(corner 6) -- (corner 9)
		(corner 7) -- (corner 9)
		(corner 8) -- (corner 6)
		;
		\draw [line width=1pt][red]
		(corner 7) -- (corner 3)
		(corner 8) -- (corner 3)
		(corner 8) -- (corner 6)
		(corner 7) -- (corner 8)
		(corner 7) -- (corner 0)
		(corner 0) -- (corner 3)
		(corner 6) -- (corner 3)
		(corner 0) -- (corner 6)
		;
		\fill[radius=2pt] \foreach \i in {0, ..., 9} { (corner \i) circle(2pt) };
		\node at (5,-1.5){$\overline{P_{10}^2}$};
		
		\node at (2.5,-2.5){Figure 4};
	\end{tikzpicture}
\end{center}

	\noindent {\bf Proof of Theorem~\ref{theorem1.1}.}
	Let $n\geqslant 6$ and $t = \left\lfloor{n}/{4}\right\rfloor$. 
	Let $F$ be an $n$-vertex graph with at most $n - 2$ edges. 
	Since $\bigtriangleup (\overline{P^2_{n}}) =n-3$, 
	$P^2_{n}$ does not pack with $S_{n-1}$.
	Note that $\overline{P^2_{n}} - S_{n-2}$ are two isolated vertices.
    So $P^2_{n}$ does not pack with  $S_{n-2}\cup K_2$.
	Assume that $F$ is $\mathcal{H}^*_n$-free graph. 
	If $n = 6$, then it is clear that $F$ packs with $P^2_{6}$ (see Figures 2 and 3).
	For $7\leqslant n \leqslant 13$, assume that the theorem holds for $n - 1$. For each $n$, we consider $F$ in the following three cases:
	\begin{itemize}
	\item (a) $\delta^*(F)\geqslant t+1$,
	
	
	\item (b) $\delta^*(F)\leqslant t$ and there is a vertex $x$ with $1 \leqslant d(x) \leqslant t$ such that $F - x$ is $\mathcal{H}^*_{n-1}$-free graph and
	
	\item (c) $\delta^*(F)\leqslant t$ and $F - x$ contains some graph in $\mathcal{H}^*_{n-1}\setminus \{S_{n-2}, S_{n-3}\cup K_2\}$ as a subgraph for each $x$ with  $1 \leqslant d(x) \leqslant t$.
	\end{itemize}
	For $\delta^*(F)\leqslant t$, if $F - x$ contains $S_{n-2}$  or $S_{n-3}\cup K_2$ as a subgraph for some vertex $x$ with $1 \leqslant d(x) \leqslant t$, then there are $n-2$ edges in $F$ and $d(x)=1$.
	Since $F$ is $\mathcal{H}^*_n$-free graph, we can easily find  a vertex $y\in V(F)$  with $1 \leqslant d(y) \leqslant t$ such that $F - y$ is $\mathcal{H}^*_{n-1}$-free graph. i.e.,  $F$ belongs to case (b). 
Therefore, $F$ belongs one of cases (a), (b) or (c).

    For all $7\leqslant n \leqslant 13$, in case (b), by the induction hypothesis, $F - x$ packs with $P^2_{n-1}$, and hence $F$ packs with $P^2_{n}$ according to Proposition \ref{proposition2.0}. Thus, we are left with cases (a) and (c).

	
	Let $n=7$. Then $t = 1$. The graphs in case (a) are $C_5$, $C_4$ and $K_3$ (see Figure 3). It is easy to see that $P^2_{7}$ packs with $C_5$, $C_4$ and $K_3$. 
Note that $\mathcal{H}^*_{6}\setminus\{S_{4}, S_{3}\cup K_2\}=\{K_3\}$.
The graphs in case (c) are $K_3 \cup P_3$, $K_3 \cup M_2$, $K^+_3\cup K_2$, $G_1$, $G_2$ and $G_3$, where $M_2$ is the 4-vertex graph on 2 independent edges,
	 $G_1$, $G_2$ and $G_3$ are obtained from $K ^+_3$ by adding a new vertex and connecting it to a vertex of $K ^{+}_{3}$ with degree one, two and three respectively. 
	 For all such $F$,  we can get $P^2_{7}$ packs with $F$ by $P^2_{7}$ packs with $K_3$.

	Let $n = 8$. Then $t = 2$. The unique graph $H$ with $\delta(H) \geqslant  3$ and $e(H) \leqslant 6$ is $K_4$. 
Since $F$ is $\mathcal{H}^*_{8}$-free graph and  $K_4 \in\mathcal{H}^*_{8}$, thus there is no graph in case (a). 
	Note that after deleting a vertex with degree at most two, the graphs in case (c) must contain $K^-_4$ as a subgraph.
	Since there are at most 6 edges in $F$ and $F$ is $K_4$-free graph, thus there is no graph in case (c).
	
	Let $n = 9$. Then $t = 2$. 
	 The unique graph $H$ with $\delta(H)\geqslant 3$ and $e(H) \leqslant 7$ is $K_4$. 
	Since $F$ is $\mathcal{H}^*_{9}$-free graph and  $K_4 \in\mathcal{H}^*_{9}$, 
	 there is no graph in case (a).	 
	Since $\overline{P^2_{9}}$ is $K_4$-free graph (the three vertices of each triangle of $\overline{P^2_{9}}$ have no common neighbors, see Figure 3), there
	is no graph in case (c).
	
	Let $n = 10$. Then $t = 2$. The graphs in case (a) are $K_4$ and $W_5$
	(the graph obtained from $C_4$ by adding a new
	vertex and joining it to all vertices of $C_4$).
%
 We can easy get that $F$ packs with $K_4$ and $W_5$ (see Figure 3). 
	The graphs in case (c) are $K ^+_4 \cup K_2$, $K_4 \cup M_2$, $K_4 \cup P_3$, $G_4$, $G_5$, $G_6$ and $G_7$, where $G_4$, $G_5$ and $G_6$ are obtained from $K ^+_4$ by adding a new vertex and joining it to a vertex of $K ^+_4$ with degree one, three and four respectively and $G_7$ is obtained from $K_4$ by adding
	an isolated vertex and joining it to two vertices of $K_4$. 
%
	For all such $F$,  we can get $P^2_{10}$ packs with $F$ by $P^2_{10}$ packs with $K_4$ (see Figure 4).

	Let $n = 11$. Then $t = 2$. In case (a) the graphs with minimum degree at least three and on at most 9 edges are $K_4$, $W_5$,
	$K ^- _5$, $K_{3,3}$ and $G_8$, where $K_{3,3}$ is the complete bipartite graph with partite sets with sizes 3 and 3, and $G_8$ is obtained from two vertex disjoint copies of $K_3$ and joining three independent edges between them. Obviously, $P^2_{11}$	packs with each graph in case (a) (see Figures 5 and 6). Clearly, there is no graph in case (c).
	
	Let $n = 12$. Then $t = 3$. 
	The unique graph $H$ with $\delta(H) \geqslant  4$ and $e(H) \leqslant 10$ is $K_5$.
	Since $F$ is $\mathcal{H}^*_{12}$-free graph, thus there is no graph in case(a). 
	 Clearly, there is no graph in case (c).

	Let $n = 13$. Then $t = 3$. In case (a) the unique graph with minimum degree at least 4 on at most 11 edges is $K_5$. 
	It is obvious that $P^2_{13}$ packs with $K_5$. 
	Now the graphs in case (c) are $K^ +_5$ and $K_5\cup K_2$. 
	Since $P^2_{13}$ packs with $K_5$ (see Figure 6),  $P^2_{13}$ packs with $K^+ _5$ and $K_5 \cup K_2$.

	Suppose it is true for $n- 1 \geqslant 13$. For each graph
	on at most $n -3$ edges, there is a graph in $\mathcal{K}(n ,n-2)\setminus \{S_{n-1},S_{n-2}\cup K_2\}$ contains it as a subgraph. 
	It is sufficient to show that $P^2_{n}$
	packs with each $F\in \mathcal{K}(n,n-2)\setminus \{S_{n-1},S_{n-2}\cup K_2\}$. 
	Then by induction hypothesis,  $P^2_{n-1}$ packs with each $F'\in \mathcal{K}(n-1,n-3)\setminus \{S_{n-2},S_{n-3}\cup K_2\}$. 
	We consider the following two cases. 
	(a). $1\leqslant\delta^*(F)\leqslant t$. By Proposition \ref{proposition2.0}, we get that $P^2_{n}$ packs with $F$.
	 (b). $\delta^*(F)\geqslant t+1$. Then the number of non-isolated vertices of $ F$ is at most 
	 $\lfloor 2(n -2)/ \lceil(n+4)/4\rceil \rfloor$. 
	 On the other hand, it is easy to
	see that $P^2_{n}$ packs with $K_s$, 
	where $s =\lceil{n}/{3}\rceil$. 
	If $n \geqslant 16$, then we have 
	$\lfloor 2(n -2)/  \lceil(n+4)/4\rceil  \rfloor\leqslant\lceil{n}/{3}\rceil$, i.e., $K_s$ contains $F$.
	Thus $P^2_{n}$ packs with $F$. Let
	$n\in\{14, 15\}$.
Then $t=3$. 
By consider the neighbors of $P^2_{n}$, $P^2_{n}$ packs with $K^- _6$.
 Since $F$ has at most $n-2\leqslant 13$ edges and $\delta^\ast(F)\geqslant 4$, the number of non-isolated
	vertices of $F$ is at most  6, whence $K^-_6$  contains $F$.
Therefore, $P^2_{n}$ packs with $F$, the proof is complete.
	\hfill$\square$ \medskip

	\begin{center}
		\begin{tikzpicture}
			
			\path
			(0, 0)
			\foreach \i in {0, ..., 10} {
				+(360/11 * \i:1cm) coordinate (corner \i)
			}
			;\draw [line width=0.2pt]
			(corner 0) -- (corner 3)
			(corner 0) -- (corner 4)
			(corner 0) -- (corner 5)
			(corner 0) -- (corner 6)
			(corner 0) -- (corner 7)
			(corner 0) -- (corner 8)
			(corner 1) -- (corner 4)
			(corner 1) -- (corner 5)
			(corner 1) -- (corner 6)
			(corner 1) -- (corner 7)
			(corner 1) -- (corner 8)
			(corner 1) -- (corner 9)
			(corner 2) -- (corner 5)
			(corner 2) -- (corner 6)
			(corner 2) -- (corner 7)
			(corner 2) -- (corner 8)
			(corner 2) -- (corner 9)
			(corner 2) -- (corner 10)
			(corner 3) -- (corner 6)
			(corner 3) -- (corner 7)
			(corner 3) -- (corner 8)
			(corner 3) -- (corner 9)
			(corner 3) -- (corner 10)
			(corner 4) -- (corner 9)
			(corner 4) -- (corner 7)
			(corner 4) -- (corner 8)
			(corner 4) -- (corner 10)
			(corner 5) -- (corner 8)
			(corner 5) -- (corner 9)
			(corner 5) -- (corner 10)
			(corner 6) -- (corner 9)
			(corner 6) -- (corner 10)
			(corner 7) -- (corner 10)
			(corner 7) -- (corner 9)
			(corner 8) -- (corner 10)
			;
			\draw [line width=1pt][red]
			(corner 1) -- (corner 9)
			(corner 1) -- (corner 8)
			(corner 1) -- (corner 4)
			(corner 4) -- (corner 8)
			(corner 4) -- (corner 9)
			(corner 9) -- (corner 8)
			;
			
			\fill[radius=2pt] \foreach \i in {0, ..., 10} { (corner \i) circle(2pt) };
			\node at (0,-1.5){$\overline{P_{11}^2}$};
			
			\path
			(3, 0)
			\foreach \i in {0, ..., 10} {
				+(360/11 * \i:1cm) coordinate (corner \i)
			}
			;\draw [line width=0.2pt]
			(corner 0) -- (corner 3)
			(corner 0) -- (corner 4)
			(corner 0) -- (corner 5)
			(corner 0) -- (corner 6)
			(corner 0) -- (corner 7)
			(corner 0) -- (corner 8)
			(corner 1) -- (corner 4)
			(corner 1) -- (corner 5)
			(corner 1) -- (corner 6)
			(corner 1) -- (corner 7)
			(corner 1) -- (corner 8)
			(corner 1) -- (corner 9)
			(corner 2) -- (corner 5)
			(corner 2) -- (corner 6)
			(corner 2) -- (corner 7)
			(corner 2) -- (corner 8)
			(corner 2) -- (corner 9)
			(corner 2) -- (corner 10)
			(corner 3) -- (corner 6)
			(corner 3) -- (corner 7)
			(corner 3) -- (corner 8)
			(corner 3) -- (corner 9)
			(corner 3) -- (corner 10)
			(corner 4) -- (corner 9)
			(corner 4) -- (corner 7)
			(corner 4) -- (corner 8)
			(corner 4) -- (corner 10)
			(corner 5) -- (corner 8)
			(corner 5) -- (corner 9)
			(corner 5) -- (corner 10)
			(corner 6) -- (corner 9)
			(corner 6) -- (corner 10)
			(corner 7) -- (corner 10)
			(corner 7) -- (corner 9)
			(corner 8) -- (corner 10)
			;
			;
			\draw [line width=1pt][red]
			(corner 8) -- (corner 10)
			(corner 3) -- (corner 10)
			(corner 6) -- (corner 10)
			(corner 9) -- (corner 8)
			(corner 6) -- (corner 9)
			(corner 3) -- (corner 9)
			(corner 8) -- (corner 3)
			(corner 3) -- (corner 6)
			
			;
			\fill[radius=2pt] \foreach \i in {0, ..., 10} { (corner \i) circle(2pt) };
			\node at (3,-1.5){$\overline{P_{11}^2}$};

			\path
			(6, 0)
			\foreach \i in {0, ..., 10} {
				+(360/11 * \i:1cm) coordinate (corner \i)
			}
			;\draw [line width=0.2pt]
			(corner 0) -- (corner 3)
			(corner 0) -- (corner 4)
			(corner 0) -- (corner 5)
			(corner 0) -- (corner 6)
			(corner 0) -- (corner 7)
			(corner 0) -- (corner 8)
			(corner 1) -- (corner 4)
			(corner 1) -- (corner 5)
			(corner 1) -- (corner 6)
			(corner 1) -- (corner 7)
			(corner 1) -- (corner 8)
			(corner 1) -- (corner 9)
			(corner 2) -- (corner 5)
			(corner 2) -- (corner 6)
			(corner 2) -- (corner 7)
			(corner 2) -- (corner 8)
			(corner 2) -- (corner 9)
			(corner 2) -- (corner 10)
			(corner 3) -- (corner 6)
			(corner 3) -- (corner 7)
			(corner 3) -- (corner 8)
			(corner 3) -- (corner 9)
			(corner 3) -- (corner 10)
			(corner 4) -- (corner 9)
			(corner 4) -- (corner 7)
			(corner 4) -- (corner 8)
			(corner 4) -- (corner 10)
			(corner 5) -- (corner 8)
			(corner 5) -- (corner 9)
			(corner 5) -- (corner 10)
			(corner 6) -- (corner 9)
			(corner 6) -- (corner 10)
			(corner 7) -- (corner 10)
			(corner 7) -- (corner 9)
			(corner 8) -- (corner 10)
			;
			\draw [line width=1pt][red]
			(corner 1) -- (corner 8)
			(corner 4) -- (corner 7)
			(corner 1) -- (corner 7)
			(corner 1) -- (corner 9)
			(corner 4) -- (corner 9)
			(corner 8) -- (corner 9)
			(corner 7) -- (corner 9)
			(corner 8) -- (corner 4)
			(corner 1) -- (corner 4)
			;

			\fill[radius=2pt] \foreach \i in {0, ..., 10} { (corner \i) circle(2pt) };
			\node at (6,-1.5){$\overline{P_{11}^2}$};
			
			\path
			(9, 0)
			\foreach \i in {0, ..., 10} {
				+(360/11 * \i:1cm) coordinate (corner \i)
			}
			;\draw [line width=0.2pt]
			(corner 0) -- (corner 3)
			(corner 0) -- (corner 4)
			(corner 0) -- (corner 5)
			(corner 0) -- (corner 6)
			(corner 0) -- (corner 7)
			(corner 0) -- (corner 8)
			(corner 1) -- (corner 4)
			(corner 1) -- (corner 5)
			(corner 1) -- (corner 6)
			(corner 1) -- (corner 7)
			(corner 1) -- (corner 8)
			(corner 1) -- (corner 9)
			(corner 2) -- (corner 5)
			(corner 2) -- (corner 6)
			(corner 2) -- (corner 7)
			(corner 2) -- (corner 8)
			(corner 2) -- (corner 9)
			(corner 2) -- (corner 10)
			(corner 3) -- (corner 6)
			(corner 3) -- (corner 7)
			(corner 3) -- (corner 8)
			(corner 3) -- (corner 9)
			(corner 3) -- (corner 10)
			(corner 4) -- (corner 9)
			(corner 4) -- (corner 7)
			(corner 4) -- (corner 8)
			(corner 4) -- (corner 10)
			(corner 5) -- (corner 8)
			(corner 5) -- (corner 9)
			(corner 5) -- (corner 10)
			(corner 6) -- (corner 9)
			(corner 6) -- (corner 10)
			(corner 7) -- (corner 10)
			(corner 7) -- (corner 9)
			(corner 8) -- (corner 10)
			(corner 8) -- (corner 9)
			;
			\draw [line width=1pt][red]
			(corner 0) -- (corner 4)
			(corner 0) -- (corner 5)
			(corner 0) -- (corner 6)
			(corner 1) -- (corner 4)
			(corner 1) -- (corner 5)
			(corner 1) -- (corner 6)
			(corner 10) -- (corner 4)
			(corner 10) -- (corner 5)
			(corner 10) -- (corner 6)
			;
			\fill[radius=2pt] \foreach \i in {0, ..., 10} { (corner \i) circle(2pt) };
			\node at (9,-1.5){$\overline{P_{11}^2}$};
			\node at (4.5,-2.5){Figure 5};
		\end{tikzpicture}
	\end{center}
	
\begin{center}
	\begin{tikzpicture}

		\path
		(0, 0)
		\foreach \i in {0, ..., 10} {
			+(360/11 * \i:1cm) coordinate (corner \i)
		}
		;\draw [line width=0.2pt]
		(corner 0) -- (corner 3)
		(corner 0) -- (corner 4)
		(corner 0) -- (corner 5)
		(corner 0) -- (corner 6)
		(corner 0) -- (corner 7)
		(corner 0) -- (corner 8)
		(corner 1) -- (corner 4)
		(corner 1) -- (corner 5)
		(corner 1) -- (corner 6)
		(corner 1) -- (corner 7)
		(corner 1) -- (corner 8)
		(corner 1) -- (corner 9)
		(corner 2) -- (corner 5)
		(corner 2) -- (corner 6)
		(corner 2) -- (corner 7)
		(corner 2) -- (corner 8)
		(corner 2) -- (corner 9)
		(corner 2) -- (corner 10)
		(corner 3) -- (corner 6)
		(corner 3) -- (corner 7)
		(corner 3) -- (corner 8)
		(corner 3) -- (corner 9)
		(corner 3) -- (corner 10)
		(corner 4) -- (corner 9)
		(corner 4) -- (corner 7)
		(corner 4) -- (corner 8)
		(corner 4) -- (corner 10)
		(corner 5) -- (corner 8)
		(corner 5) -- (corner 9)
		(corner 5) -- (corner 10)
		(corner 6) -- (corner 9)
		(corner 6) -- (corner 10)
		(corner 7) -- (corner 10)
		(corner 7) -- (corner 9)
		(corner 8) -- (corner 10)
		(corner 8) -- (corner 9)
		;
		\draw [line width=1pt][red]
		(corner 0) -- (corner 8)
		(corner 0) -- (corner 4)
		(corner 0) -- (corner 5)
		(corner 1) -- (corner 8)
		(corner 1) -- (corner 5)
		(corner 1) -- (corner 9)
		(corner 9) -- (corner 4)
		(corner 9) -- (corner 5)
		(corner 9) -- (corner 1)
		(corner 8) -- (corner 4)
		;
		\fill[radius=2pt] \foreach \i in {0, ..., 10} { (corner \i) circle(2pt) };
		\node at (0,-1.5){$\overline{P_{11}^2}$};
		
		\path
		(5, 0)
		\foreach \i in {0, ..., 12} {
			+(360/13 * \i:1cm) coordinate (corner \i)
		}
		;\draw [line width=0.2pt]
		(corner 0) -- (corner 3)
		(corner 0) -- (corner 4)
		(corner 0) -- (corner 5)
		(corner 0) -- (corner 6)
		(corner 0) -- (corner 7)
		(corner 0) -- (corner 8)
		(corner 0) -- (corner 9)
		(corner 0) -- (corner 10)
		(corner 1) -- (corner 4)
		(corner 1) -- (corner 5)
		(corner 1) -- (corner 6)
		(corner 1) -- (corner 7)
		(corner 1) -- (corner 8)
		(corner 1) -- (corner 9)
		(corner 1) -- (corner 10)
		(corner 1) -- (corner 11)
		(corner 2) -- (corner 5)
		(corner 2) -- (corner 6)
		(corner 2) -- (corner 7)
		(corner 2) -- (corner 8)
		(corner 2) -- (corner 9)
		(corner 2) -- (corner 10)
		(corner 2) -- (corner 11)
		(corner 2) -- (corner 12)
		(corner 3) -- (corner 6)
		(corner 3) -- (corner 7)
		(corner 3) -- (corner 8)
		(corner 3) -- (corner 9)
		(corner 3) -- (corner 10)
		(corner 3) -- (corner 11)
		(corner 3) -- (corner 12)
		(corner 4) -- (corner 9)
		(corner 4) -- (corner 7)
		(corner 4) -- (corner 8)
		(corner 4) -- (corner 10)
		(corner 4) -- (corner 11)
		(corner 4) -- (corner 12)
		(corner 5) -- (corner 8)
		(corner 5) -- (corner 9)
		(corner 5) -- (corner 10)
		(corner 5) -- (corner 11)
		(corner 5) -- (corner 12)
		(corner 6) -- (corner 9)
		(corner 6) -- (corner 10)
		(corner 6) -- (corner 11)
		(corner 6) -- (corner 12)
		(corner 7) -- (corner 10)
		(corner 7) -- (corner 11)
		(corner 7) -- (corner 12)
		(corner 8) -- (corner 11)
		(corner 8) -- (corner 12)
		(corner 9) -- (corner 12)
		(corner 9) -- (corner 11)
		(corner 8) -- (corner 10)
		;
		\draw [line width=1pt][red]
		(corner 10) -- (corner 0)
		(corner 3) -- (corner 9)
		(corner 10) -- (corner 3)
		(corner 6) -- (corner 9)
		(corner 10) -- (corner 6)
		(corner 6) -- (corner 0)
		(corner 9) -- (corner 10)
		(corner 9) -- (corner 0)
		;
		\fill[radius=2pt] \foreach \i in {0, ..., 12} { (corner \i) circle(2pt) };
		\node at (5,-1.5){$\overline{P_{13}^2}$};
		\node at (2.5,-2.5){Figure 6};
	\end{tikzpicture}
\end{center}

	\section{Proof of Theorem \ref{theorem2}}\label{section 3}

	The proof of Theorem \ref{theorem2} is based on the following Lemmas.
	
	\begin{lemma}[Fiedler and Nikiforov \cite{2010Fiedler2170}\label{lemma3.1}]
		Let $G$ be a graph of order $n$ and spectral radius $\mu(G)$.
		If $$\mu(G)\geqslant n-2,$$ then $G$ contains a Hamiltonian path unless $G=K_{n-1}\cup K_1$.
	\end{lemma}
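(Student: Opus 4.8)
The plan is to argue by contradiction: assume $\mu(G)\geqslant n-2$ and that $G$ has no Hamiltonian path, and deduce that $G=K_{n-1}\cup K_1$. I would split according to whether $G$ is connected. The disconnected case is pinned down directly by the spectral radius, while in the connected case I would reduce the path problem to a Hamiltonian-cycle problem so that Ore's theorem \cite{1961Ore315} applies, the only genuinely spectral input being an elementary lower bound on the number of edges in terms of $\mu(G)$.

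First the disconnected case. If $G$ is disconnected, then $\mu(G)$ equals the largest spectral radius among its components, so some component $C$ satisfies $\mu(C)\geqslant n-2$. A connected graph on $k$ vertices has spectral radius at most $k-1$, with equality if and only if it is complete; hence $n-2\leqslant\mu(C)\leqslant |C|-1\leqslant n-2$, where the last inequality uses $|C|\leqslant n-1$ because $G$ is disconnected. Equality throughout forces $|C|=n-1$ and $C=K_{n-1}$, and the single remaining vertex is isolated, so $G=K_{n-1}\cup K_1$, which is exactly the excluded graph. From now on $G$ is connected.

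Now suppose $G$ is connected with no Hamiltonian path. Form $G'=G\vee K_1$ by adding one vertex $w$ joined to all of $V(G)$; then $G$ has a Hamiltonian path if and only if $G'$ has a Hamiltonian cycle, since a Hamiltonian path of $G$ closes through $w$, and deleting $w$ from a Hamiltonian cycle of $G'$ leaves a Hamiltonian path of $G$. Thus $G'$ has no Hamiltonian cycle. To bound $e(G')$ from below I would invoke Hong's inequality $\mu(G)^2\leqslant 2e(G)-n+1$, valid for connected graphs: combined with $\mu(G)\geqslant n-2$ it gives $2e(G)\geqslant (n-2)^2+n-1=n^2-3n+3$, and since $e(G)$ is an integer this yields $e(G)\geqslant\binom{n-1}{2}+1$. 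Hence $e(G')=e(G)+n\geqslant\binom{n-1}{2}+1+n=\binom{n}{2}+2$. Applying Ore's theorem \cite{1961Ore315} to $G'$ on $n+1$ vertices: a graph of order $n+1$ with more than $\binom{n}{2}$ edges has a Hamiltonian cycle unless it is one of the exceptional graphs $K_{n+1}-E(S_{n})$ or $K_5-E(K_3)$; each of these has exactly $\binom{n}{2}+1$ edges, strictly fewer than $e(G')$. Therefore $G'$ cannot be exceptional, so $G'$ has a Hamiltonian cycle, whence $G$ has a Hamiltonian path, contradicting our assumption. This contradiction settles the connected case and completes the lemma.

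The crux I expect is the passage from the spectral hypothesis to $e(G)\geqslant\binom{n-1}{2}+1$: once this edge bound is in hand, everything downstream is a clean comparison against the edge counts of Ore's two extremal graphs, with a one-edge margin that a non-extremal graph is forced to carry. If one prefers not to quote Hong's inequality, the same lower bound on $e(G)$ can be extracted directly from a Perron-vector (Rayleigh-quotient) estimate, which is the only place any spectral reasoning enters; the remainder is purely combinatorial. For the finitely many small orders where Ore's theorem is not available, the statement can be verified by inspection.
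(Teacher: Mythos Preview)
The paper does not prove this lemma; it is quoted from Fiedler and Nikiforov \cite{2010Fiedler2170} and used as a black box in the proof of Theorem~\ref{theorem2}, so there is no in-paper argument to compare against.

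That said, your proof is correct. The disconnected case is pinned down cleanly by $n-2\leqslant\mu(C)\leqslant |C|-1\leqslant n-2$. In the connected case, the reduction of the Hamiltonian-path question on $G$ to a Hamiltonian-cycle question on $G\vee K_1$, combined with Hong's inequality (which is precisely Lemma~\ref{lemma3.2} here) to force $e(G)\geqslant\binom{n-1}{2}+1$ and hence $e(G')\geqslant\binom{n}{2}+2$, gives exactly the one-edge margin needed to rule out both of Ore's exceptional graphs on $n+1$ vertices. Every tool you invoke---Ore's theorem and Hong's bound---is already stated in this paper, so your argument could in fact be inserted in place of the bare citation without introducing any new prerequisites.
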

	
	\begin{lemma}[Hong \cite{1988Yuan135}\label{lemma3.2}]
		Let $G$ be a connected graph of order $n$ with $m$ edges.
		The spectral radius $\mu(G)$ satisfies 
		$\mu(G)\leqslant \sqrt{2m-n+1}$ with equality if and only if $G$ is isomorphic to $S_n$ or $K_n$.
	\end{lemma}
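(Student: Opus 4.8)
The plan is to combine two short, elementary bounds tied together by the Perron eigenvector. Since $G$ is connected, $A=A(G)$ is irreducible and nonnegative, so by Perron--Frobenius $\mu=\mu(G)$ admits a positive eigenvector $\mathbf{x}=(x_1,\dots,x_n)^T$. I would normalize it so that $\max_i x_i=1$, fix a vertex $u$ attaining this maximum, so that $0<x_i\le 1$ for all $i$. Applying the eigenvalue equation $\mu x_i=\sum_{j\sim i}x_j$ at $u$ and iterating once gives $\mu^2=\mu^2 x_u=\sum_{j\sim u}\mu x_j=\sum_{j\sim u}\sum_{k\sim j}x_k$. Bounding each $x_k\le 1$ produces the local estimate $\mu^2\le\sum_{j\sim u}d(j)$, where $d(j)$ denotes the degree of $j$.

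The second ingredient converts this local degree sum into the global quantity $2m-n+1$, using only $\delta(G)\ge 1$ (which holds because $G$ is connected). Writing $\sum_{j\sim u}d(j)=2m-\sum_{w\notin N(u)}d(w)=2m-d(u)-\sum_{w\notin N[u]}d(w)$, where $N[u]$ is the closed neighborhood, I note that there are exactly $n-1-d(u)$ vertices outside $N[u]$, each of degree at least $1$, so $\sum_{w\notin N[u]}d(w)\ge n-1-d(u)$. Substituting yields $\sum_{j\sim u}d(j)\le 2m-(n-1)$, and together with the local estimate this is precisely $\mu^2\le 2m-n+1$, i.e. $\mu\le\sqrt{2m-n+1}$.

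The part I expect to be the real obstacle is the equality discussion, which requires equality in both steps at once. The counting step forces every vertex outside $N[u]$ to have degree exactly $1$, while the walk step forces $\mu x_j=d(j)$ for every $j\sim u$, i.e. every neighbor of every neighbor of $u$ has eigenvector value $1$. For $n\ge 3$ one has $\mu>1$, and a degree-one vertex at distance two from $u$ would be forced to have value $1$ while its eigen-equation gives value at most $1/\mu<1$; hence no vertex lies at distance $\ge 2$ from $u$, so $d(u)=n-1$.

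I would then run a dichotomy on $B=\{v:x_v=1\}$: the equality conditions show that each vertex other than $u$ is either a leaf adjacent only to $u$ (value $1/\mu$) or lies in $B$ with all its neighbors in $B$ (value $1$). If there are no leaves, $\mathbf{x}$ is the all-ones vector, forcing $G$ to be regular of degree $d(u)=n-1$, i.e. $G=K_n$; if $B=\{u\}$, every other vertex is a leaf and $G=S_n$. To exclude a genuine mixture, let $p=|B|-1$ and $q$ be the number of leaves. The eigen-equation at $u$ gives $\mu=p+q/\mu$, hence $\mu^2=p\mu+q$; meanwhile each vertex $b\in B\setminus\{u\}$ satisfies $d(b)=\mu$ with all neighbors in the $(p+1)$-set $B$, so $\mu=d(b)\le p$. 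A leaf forces $q\ge 1$, whence $\mu^2=p\mu+q>p\mu$ gives $\mu>p$, contradicting $\mu\le p$. Thus only $K_n$ and $S_n$ attain equality (the cases $n\le 2$, where $K_2=S_2$, being immediate), and a direct substitution confirms both do, completing the argument.
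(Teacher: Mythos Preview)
The paper does not supply a proof of this lemma; it is quoted from Hong's 1988 paper and used as a black box in the proof of Theorem~\ref{theorem2}. So there is no ``paper's own proof'' to compare against here.

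That said, your argument is correct and is essentially the classical proof. The two inequalities you chain --- the walk bound $\mu^2\le\sum_{j\sim u}d(j)$ obtained from the Perron vector at a maximizing vertex, and the counting bound $\sum_{j\sim u}d(j)\le 2m-(n-1)$ obtained from $\delta\ge 1$ --- are exactly the ingredients in Hong's original paper. Your equality analysis is also sound: the key observations that (i) equality forces $d(u)=n-1$ (ruling out vertices at distance $\ge 2$ via the $\mu>1$ contradiction), and (ii) the remaining vertices split cleanly into leaves with value $1/\mu$ and vertices in $B$ with $d=\mu$ and $N\subseteq B$, are correct, and the final $\mu\le p$ versus $\mu>p$ contradiction neatly excludes the mixed case. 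One minor remark: when you write ``$\mu x_j=d(j)$ for every $j\sim u$'' in the equality discussion, this is a consequence of the stronger statement you actually use (that every second neighbor of $u$ has value $1$), not an independent condition; the exposition would be slightly cleaner if you phrased it directly as $N(j)\subseteq B$ for all $j\ne u$ once $d(u)=n-1$ is established.
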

	
		\noindent {\bf Proof of Theorem~\ref{theorem2}.}
  Let $\mu(G)> n-2$ and $n\geqslant 6$. Suppose that $G$ is not a subgraph of $K_n - E(S_{n-1})$ and $K_6 - E(K_3)$ for $n\geqslant 6$.
   It follows from Lemma \ref{lemma3.1} that $G$ contains a Hamilton path ($K_n - E(S_{n-1})$ contains $K_{n-1}\cup K_1$ as a subgraph), whence $G$ is connected.
  By Lemma \ref{lemma3.2}, we have $\mu(G)\leqslant \sqrt{2e(G)-n+1}$,
  with equality if and only if $G=K_n$ ($S_n$ does not contain a Hamilton path).
Since $K_n$ contains a copy of $P_n^2$, we may assume that $\mu(G)< \sqrt{2e(G)-n+1}$.
Then $e(G)>(n^2-3n+3)/2$, implying $e(G)\geqslant  {n-1 \choose 2}+1$.
  By Theorem \ref{theorem1.1}, $G$ contains a copy of $P_n^2$ unless $G\in \{K_n- E(H) : H\in \mathcal{H}^*_n\}$.
  Since the maximum degree of $K_n - E(S_{n-2}\cup K_2)$ is $n-2$, we get  $\mu(K_n - E(S_{n-2}\cup K_2)) \leqslant n-2$.
  By tedious calculations, we get $\mu(K_6 - E( K_3)) > 4$, $\mu(K_7 - E(K_4^-)) < 5$, $\mu(K_n - E( K_4)) < n-2$ for $n=8,9$ and $\mu(K_{12} - E( K_5) ) <10$.
  Hence, the proof is complete.
	\hfill$\square$ \medskip

\bibliographystyle{elsarticle-num} 
  \bibliography{2_-power_of_Hamilton_paths-11-17}
\end{document}